\documentclass{amsart}
\usepackage{amssymb,latexsym}

\input xypic
\usepackage[bookmarks=true]{hyperref}       

\usepackage{amssymb,latexsym,amsmath,amscd}
\usepackage{xspace}
\usepackage{enumerate}
\usepackage{graphicx}
\usepackage{vmargin}
\usepackage{todonotes}

\DeclareMathOperator{\reg}{reg}


\reversemarginpar

\vfuzz2pt 
\hfuzz2pt 

\theoremstyle{plain}
\newtheorem{theorem}{Theorem}[section]
\newtheorem*{theorem*}{Theorem}
\newtheorem{proposition}[theorem]{Proposition}

\theoremstyle{definition}
\newtheorem{definition}[theorem]{Definition}

\newtheorem{example}[theorem]{Example}

\newtheorem{remark}[theorem]{Remark}

\newcommand{\enm}[1]{\ensuremath{#1}}          %

\newcommand{\cal}[1]{\mathcal{#1}}

\newcommand{\CC}{\enm{\mathbb{C}}}

\newcommand{\NN}{\enm{\mathbb{N}}}
\newcommand{\RR}{\enm{\mathbb{R}}}

\newcommand{\PP}{\enm{\mathbb{P}}}

\newcommand{\Aa}{\enm{\cal{A}}}
\newcommand{\Bb}{\enm{\cal{B}}}
\newcommand{\Cc}{\enm{\cal{C}}}

\newcommand{\Rr}{\enm{\cal{R}}}

\newcommand{\Vv}{\enm{\cal{V}}}

\renewcommand{\phi}{\varphi}
\renewcommand{\theta}{\vartheta}
\renewcommand{\epsilon}{\varepsilon}

\begin{document}

\title[partially real rank]
{Partially complex ranks for real projective varieties}
\author{E. Ballico}
\address{Dept. of Mathematics\\
 University of Trento\\
38123 Povo (TN), Italy}
\email{ballico@science.unitn.it}
\thanks{The author was partially supported by MIUR and GNSAGA of INdAM (Italy).}
\subjclass[2010]{14N05; 15A69}
\keywords{tensor rank; real tensor rank; real symmetric tensor rank; additive decomposition of polynomials; typical rank}

\begin{abstract}
Let $X(\CC )\subset \PP^r(\CC)$ be an integral non-degenerate variety defined over $\RR$. For any $q\in \PP^r(\RR)$ we study the existence
of $S\subset X(\CC)$ with small cardinality, invariant for the complex conjugation and with $q$ contained in the real linear
space spanned by $S$. We discuss the advantages of these additive decompositions with respect to the $X(\RR)$-rank, i.e. the
rank of $q$ with respect to $X(\RR)$. We describe the case of hypersurfaces and Veronese varieties.
\end{abstract}

\maketitle

\section{Introduction}
In recent years a lot of effort is devoted to the study of secant varieties of projective varieties which are
defined over $\RR$ (\cite{abc, bb, bbo, b, bs, mm, mmsv}). As in
\cite{bb} a
\emph{label} is a pair
$(a,b)\in
\NN^2\setminus
\{(0,0)\}$. The weight of a label
$(a,b)$ is the integer
$2a+b$. To know a label it is sufficient to know its weight and one of its entries. Let $\sigma : \PP^r(\CC )\to \PP^r(\CC)$
denote the complex conjugation. Let $X(\CC)\subset \PP^r(\CC)$ be an integral and non-degenerate complex projective variety.
The pair consisting of $X(\CC)$ and the embedding $X(\CC )\hookrightarrow \PP^r(\CC)$ is defined over $\RR$ if and only if
$\sigma (X(\CC)) = X(\CC)$ (the reader may take the latter as the definition of a real embedded variety). Note that
$\PP^r(\RR) =\{x\in \PP^r(\CC)\mid \sigma (x)=x\}$ and $X(\RR) = X(\CC)\cap \PP^r(\RR) =\{x\in X(\CC)\mid \sigma (x)=x\}$. 

\begin{definition}A
finite set
$S \subset X(\CC)$, $S\ne \emptyset$, is said to have a \emph{label} (resp. to have $(a,b)$ as its label, resp. to have a
label of weight $k$) if
$\sigma (S)=S$ (resp. $\sigma (S)=S$, $b =|S\cap X(\RR )|$ and $|S|=2a+b$, resp. $\sigma (S)=S$  and
$|S|=k$). 
\end{definition}
For any finite subset $S\subset X(\CC)$ let $\langle S\rangle _{\CC}$ denote the minimal complex linear subspace
of $\PP^r(\CC)$ containing $S$. Set $\langle S\rangle _{\RR}:= \langle S\rangle _{\CC}\cap \PP^r(\RR)$. Note the $\sigma (\langle S\rangle _{\CC}) =\langle S\rangle _{\CC}$
if $\sigma (S) =S$. Thus $\dim _{\CC}\langle S\rangle _{\CC}
= \dim _{\RR}\langle S\rangle _{\RR}$ if $\sigma (S) =S$. For any integer $k>0$ the $k$-secant variety $\sigma _k(X(\CC ))$ of $X(\CC)$
is the closure in $\PP^r(\CC)$ of all linear spaces $\langle S\rangle _{\CC}$ with $S\subset X(\CC)$ and $|S|=k$. If
$X(\CC)\subset \PP^r(\CC)$ is defined over $\RR$, then the variety $\sigma _k(X(\CC ))$ is defined over $\RR$ and
$\sigma _k(X(\CC))\cap \PP^r(\RR)$ is the set $\sigma _k(X(\CC ))(\RR)$ of the real points of $\sigma _k(X(\CC ))$. Quite
often  $\sigma _k(X(\CC ))(\RR)$ is much bigger than the set (sometimes called $\sigma _k(X(\RR)$) which is the closure in
$\PP^r(\RR)$ of all points with $X(\RR)$-rank $k$ (see Observation 4 in Remark\ref{aaa1}, Example \ref{aaa2} and Remark \ref{aaa4}). To see in a
concrete example how to use labels we give the following example.

\begin{proposition}\label{aaa3}
Let $X(\CC)\subset \PP^{n+1}(\CC)$ be an integral hypersurface defined over $\RR$. Every $q\in \PP^{n+1}(\RR)$ has either
$(1,0)$ or
$(0,2)$ or $(0,1)$ as a label.
\end{proposition}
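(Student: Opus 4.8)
The plan is to intersect $X(\CC)$ with a suitably general real line through $q$ and to read off the label from the two points (a real pair or a conjugate pair) that the line provides. First I would dispose of the trivial case: if $q\in X(\RR)$, then $S=\{q\}$ has label $(0,1)$ and $q\in\langle S\rangle_\RR$, so there is nothing to prove. Hence assume $q\notin X(\RR)$; since $q\in\PP^{n+1}(\RR)$ this is the same as $q\notin X(\CC)$. I would also record that $d:=\deg X(\CC)\ge 2$, because a degree-$1$ hypersurface is a hyperplane, hence degenerate, whereas $X(\CC)$ is non-degenerate. Fix a real form $F$ of degree $d$ with $X(\CC)=\{F=0\}$.

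The core step is to produce a \emph{real} line $L$ (that is, $q\in L$ and $\sigma(L)=L$) meeting $X(\CC)$ in $d$ distinct points. Since $q\notin X(\CC)$, no line through $q$ lies in $X(\CC)$, so for every line $L$ through $q$ the intersection $L\cap X(\CC)$ is the degree-$d$ divisor of $F|_L$ on $L\cong\PP^1$. On the $\PP^n$ parametrising the lines through $q$, the locus of those $L$ for which $F|_L$ has a repeated root is the zero set of the discriminant of $F|_L$, which is a polynomial with \emph{real} coefficients because $F$ and $q$ are real. That polynomial is not identically zero: a general complex line through $q$ meets $X(\CC)$ transversally, in $d$ distinct points (for instance, projection from $q$ exhibits $X(\CC)$ as a finite degree-$d$ cover of $\PP^n(\CC)$, which in characteristic $0$ is generically \'etale). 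A nonzero real polynomial cannot vanish on all of $\PP^n(\RR)$, since $\PP^n(\RR)$ is Zariski-dense in $\PP^n(\CC)$; therefore a general real line $L$ through $q$ does meet $X(\CC)$ in $d\ge 2$ distinct points.

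Finally I would extract the label. Fix such an $L$ and write $L\cap X(\CC)=\{p_1,\dots,p_d\}$. Because $L$ and $X(\CC)$ are $\sigma$-invariant, $\sigma$ permutes $\{p_1,\dots,p_d\}$ with orbits of size $1$ (points of $X(\RR)$) or $2$ (conjugate pairs of non-real points). As $d\ge 2$, either this set contains two distinct points $p_i,p_j$ of $X(\RR)$ — then $S=\{p_i,p_j\}$ has label $(0,2)$ — or it contains a conjugate pair $\{p_i,\sigma(p_i)\}$ of non-real points — then $S=\{p_i,\sigma(p_i)\}$ has label $(1,0)$. In either case $S$ is a set of two distinct points of $L$, so $\langle S\rangle_\CC=L$ and hence $q\in L\cap\PP^{n+1}(\RR)=\langle S\rangle_\RR$, as required. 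I expect the only genuinely delicate point to be the existence of the real line $L$: one must be sure that the relevant genericity condition is defined over $\RR$ — which it is, being governed by a discriminant in the real coefficients of $F$ — so that a general \emph{real} line through $q$ already satisfies it. Everything else is bookkeeping with conjugation-invariant finite subsets of $\PP^1$.
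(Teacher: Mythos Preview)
Your proof is correct and follows essentially the same approach as the paper: dispose of $q\in X(\RR)$, then intersect $X(\CC)$ with a general real line through $q$ and extract a $\sigma$-invariant pair from the $d\ge 2$ intersection points. Your discriminant argument for the existence of such a transversal real line is in fact cleaner than the paper's, which at that step invokes $X_{\reg}(\RR)\ne\emptyset$---an assumption neither stated in the proposition nor actually needed, as your Zariski-density argument shows.
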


\begin{remark}\label{aaa4}
Take $X(\CC)$ as in Proposition \ref{aaa3}. The proposition shows that each $q\in \PP^{n+1}(\RR)$ has a label with weight
equal to $r_{X(\CC)}(q)$.  In general not all points of
$\PP^{r+1}(\RR)\setminus X(\RR)$ have real rank $2$, even when $X(\RR)$ is Zariski dense in $X(\CC)$ (\cite{mm, mmsv}).
Compare the very interesting geometry in
\cite{bbo, bs, mm, mmsv} with the very simple description given by Proposition \ref{aaa3}.
\end{remark}

To show that labels help to get cheap additive decompositions of `` general '' points of $\PP^r(\RR)$ (or of $\sigma _k(X(\CC
))(\RR)$), i.e. all except a part with  smaller real dimension (and in particular smaller Hausdorff dimension) we prove the
following Theorem \ref{g1}. Let $X_{\reg}(\CC)$ denote the set of smooth points of $X(\CC)$. Set $X_{\reg}(\RR):= X_{\reg}(\CC)\cap X(\RR)$. Since $X(\CC)$ is an integral variety,
$X_{\reg}(\CC)$ is a connected complex manifold and $\dim X_{\reg}(\CC) =\dim X(\CC)$. If $X_{\reg}(\RR) \ne \emptyset$, then $X_{\reg}(\RR)$ is a real analytic manifold
of real dimension $\dim X_{\reg}(\CC)$, which is Zariski dense in $X_{\reg}(\CC)$ (Remark \ref{aad}), but closed in
$X_{\reg}(\CC)$ in the euclidean topology. We recall that generic uniqueness holds for a secant variety $\sigma _k(X(\CC))$ if
for a general $q\in
\sigma _k(X(\CC))$ there is a unique set $S\subset X(\CC)$ such that $|S| =k$ and $q\in \langle S\rangle$; here ``general ''
means ``for all $q$ in a non-empty Zariski open subset of $\sigma _k(X(\CC))$ '', but to test this condition it is sufficient
to prove it for all points $q$ of a non-empty open subset of $\sigma _k(X(\CC))$ for the euclidean topology.

\begin{theorem}\label{g1}
Let $X(\CC) \subset \PP^r(\CC)$ be an integral and non-degenerate variety defined over $\RR$ and such that $X_{\reg}(\RR)\ne \emptyset$. Let $g$ be the minimal integer
such that $\sigma _g(X(\CC)) =\PP^r(\CC)$. Assume that generic uniqueness holds for $\sigma _{g-1}(X(\CC))$. Then there
exists an open subset $U\subset \PP^r(\RR)$ (for the Zariski topology) such that $ \dim \PP^r(\RR)\setminus U \le r-1$ (and
in particular $\PP^r(\RR)\setminus U$ has measure $0$ and contains no euclidean open subset) and each $q\in U$ has a
label of weight $g+1$.
\end{theorem}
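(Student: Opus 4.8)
The plan is to exhibit a proper closed subvariety $Z\subsetneq\PP^r$ defined over $\RR$ such that every $q\in\PP^r(\RR)\setminus Z$ admits a $\sigma$-invariant $S\subset X(\CC)$ with $|S|=g+1$ and $q\in\langle S\rangle_{\RR}$; then $U:=\PP^r(\RR)\setminus Z$ is as required, since $\dim Z\le r-1$. The mechanism producing $S$ is a symmetrization followed by padding: if one can find a (possibly non-real) $S_0\subset X(\CC)$ with $q\in\langle S_0\rangle_{\CC}$ and $|S_0\cap\sigma(S_0)|\ge |S_0|-1$, then $S_0\cup\sigma(S_0)$ is $\sigma$-invariant of cardinality at most $|S_0|+1$, and $q\in\langle S_0\rangle_{\CC}\subseteq\langle S_0\cup\sigma(S_0)\rangle_{\CC}$, whose complex span is $\sigma$-invariant, so $q\in\langle S_0\cup\sigma(S_0)\rangle_{\RR}$. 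Since $X(\CC)$ is integral and non-degenerate, $\dim X\ge 1$, so $X_{\reg}(\RR)$ is infinite; thus when $|S_0|=g$ one enlarges $S_0\cup\sigma(S_0)$ to cardinality \emph{exactly} $g+1$ by adjoining real points of $X_{\reg}$, which only enlarges the span.

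The hypothesis on $\sigma_{g-1}(X(\CC))$ is used exactly to build such an $S_0$ of cardinality $g$. Let $V\subseteq\sigma_{g-1}(X(\CC))$ be the nonempty Zariski open subset, defined over $\RR$, on which the abstract $(g-1)$-secant map is bijective: each $z\in V$ has a \emph{unique} $(g-1)$-element set $T_z\subset X(\CC)$ with $z\in\langle T_z\rangle_{\CC}$. If such a $z$ is moreover a real point, then $\sigma(T_z)$ is another $(g-1)$-element set whose complex span contains $\sigma(z)=z$, so by uniqueness $\sigma(T_z)=T_z$. Therefore it is enough to write a general real $q$ as a point of a line $\langle z,x\rangle_{\CC}$ with $z\in V$ real and $x\in X(\CC)$ arbitrary: then $S_0:=T_z\cup\{x\}$ satisfies $q\in\langle S_0\rangle_{\CC}$, $\sigma(S_0)=T_z\cup\{\sigma(x)\}$, and $|S_0\cap\sigma(S_0)|\ge |T_z|=g-1$. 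One also has to know, on a dense open set, that $T_z\cup\{x\}$ really has $g$ distinct elements spanning a $\PP^{g-1}$ and that $z\ne q$; the loci where this fails are proper closed subvarieties defined over $\RR$, which are absorbed into $Z$.

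Over $\CC$ the remaining incidence statement is a dimension count. Let $\mathcal Y$ be the closure, in $V\times X_{\reg}(\CC)\times\PP^r(\CC)$, of the set of triples $(z,x,q)$ with $z\ne x$ and $q\in\langle z,x\rangle_{\CC}$; it is irreducible (a $\PP^1$-bundle over a dense open subset of $\sigma_{g-1}(X(\CC))\times X(\CC)$) and defined over $\RR$. Its projection to $\PP^r(\CC)$ has Zariski dense image in the join of $X(\CC)$ with $\sigma_{g-1}(X(\CC))$, which is $\sigma_g(X(\CC))=\PP^r(\CC)$, so the projection is dominant; and $\dim\mathcal Y=\dim\sigma_{g-1}(X(\CC))+\dim X(\CC)+1$, while, reading $\sigma_g$ as that join, $r=\dim\sigma_g(X(\CC))\le\dim X(\CC)+\dim\sigma_{g-1}(X(\CC))+1=\dim\mathcal Y$. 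Hence over a general $q$ the fibre of $\mathcal Y\to\PP^r$ is nonempty, of dimension $\dim\mathcal Y-r\ge 0$.

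The main obstacle is the descent from $\CC$ to $\RR$: nonemptiness of the \emph{complex} fibre of $\mathcal Y\to\PP^r$ over a real $q$ does not by itself give a real point of that fibre, and knowing only that the set of good $q$ is Zariski dense is not enough — we need its complement to have dimension $\le r-1$. To handle this I would use that $X(\RR)$ is Zariski dense in $X(\CC)$ (Remark \ref{aad}); it follows that $V\cap\sigma_{g-1}(X(\CC))(\RR)$ is Zariski dense in $\sigma_{g-1}(X(\CC))$, hence $\mathcal Y(\RR)$ is Zariski dense in $\mathcal Y$, and together with generic smoothness this produces a smooth real point of $\mathcal Y$ at which $\mathcal Y\to\PP^r$ is a submersion of real manifolds, so its real image has nonempty Euclidean interior. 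To upgrade this to a Zariski-dense open set of good $q$ one constructs an $\RR$-rational section of $\mathcal Y\to\PP^r$ over a Zariski-dense open subset of $\PP^r$ — available because $\mathcal Y$ is $\RR$-unirational over its image, using crucially that $X$ carries an $\RR$-point ($X_{\reg}(\RR)\ne\emptyset$) — whose value at a real $q$ is $\sigma$-fixed, i.e. a real point of the fibre. Letting $Z$ be the union of $\sigma_{g-1}(X(\CC))$, the pullbacks of $\sigma_{g-1}(X(\CC))\setminus V$ and of the non-genericity loci of the second paragraph, and the complement of the domain of the section, every $q\in\PP^r(\RR)\setminus Z$ then carries a label of weight $g+1$, as desired.
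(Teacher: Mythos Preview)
Your overall strategy coincides with the paper's: use generic uniqueness on $\sigma_{g-1}(X(\CC))$ so that a real $z$ in the uniqueness locus has a $\sigma$-invariant decomposition $T_z$ of size $g-1$, and then adjoin a conjugate pair $\{x,\sigma(x)\}$ from $X(\CC)$ to reach weight $g+1$. The paper's argument is much shorter because it outsources the two delicate points: it quotes \cite[Theorem~3]{bb} for the fact that real points of $\sigma_{g-1}(X(\CC))$ generically carry a label of weight $g-1$, and Claims~1--2 of Remark~\ref{aaa1} for the Zariski density of the relevant $\sigma$-invariant configurations; it then simply observes that the union of the planes $\langle x,y,\sigma(y)\rangle_{\CC}$ is Zariski dense in $\PP^r(\CC)$.

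Where your write-up goes wrong is the last step. You correctly isolate the real-descent issue (a nonempty complex fibre of $\mathcal{Y}\to\PP^r$ over a real $q$ need not contain a real point) and you correctly produce, via generic smoothness and the submersion theorem, a nonempty Euclidean open set of good $q$. But your proposed upgrade to a Zariski open set is not sound: ``$\RR$-unirationality of $\mathcal{Y}$ over its image'' is neither established nor sufficient. Unirationality over the base would give a dominant rational map \emph{from} some $\PP^N_{\PP^r}$ \emph{to} $\mathcal{Y}$, not a rational section of $\mathcal{Y}\to\PP^r$; and you give no argument for $\RR$-unirationality itself (the generic fibre of $\mathcal{Y}\to\PP^r$ is, roughly, a correspondence between $X$ and the intersection of lines through $q$ with $\sigma_{g-1}$, and there is no evident $\RR$-rational parametrization). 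A single Euclidean ball in the image also does not, by itself, force the complement to have dimension $\le r-1$.

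The fix is to argue at the level of semialgebraic sets rather than to seek a section. The set of $q\in\PP^r(\RR)$ admitting a label of weight $g+1$ is the image under a real-algebraic map of the real locus of the incidence variety $\{(S,q):S\in X(\CC)_{(g+1)},\ q\in\langle S\rangle\}$; by Observation~1 of Remark~\ref{aaa1} this real locus is Zariski dense in the complex incidence variety, so its image is Zariski dense in $\PP^r$. Equivalently, the bad set $\Bb$ is semialgebraic with empty Euclidean interior, hence $\dim\Bb\le r-1$, and then its Zariski closure also has dimension $\le r-1$ (Bochnak--Coste--Roy, Prop.~2.8.2), which gives the Zariski-open $U$. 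This is the content the paper is packaging in its appeal to \cite{bb} and Remark~\ref{aaa1}; your incidence-variety framework is fine, but replace the ``$\RR$-rational section'' paragraph with this semialgebraic-dimension argument.
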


The main application of Theorem \ref{g1} is when $X(\CC)$ is a Veronese embedding of $\PP^n$. Let $\nu _d: \PP^n(\CC) \to \PP^r$, $r:= -1+\binom{n+d}{n}$, be the order $d$ Veronese embedding of $\PP^n$, i.e. the embedding associated to the vector space $S^d \CC^{n+1}$ of all homogeneous degree $d$ complex polynomials in $n+1$ variables. Set $X(\CC):= \nu _d(\PP^n(\CC))$. In this case for any finite $S\subset \PP^n(\CC)$ the linear space $\langle \nu _d(S)\rangle _{\CC}$ is the $\CC$-linear span of all $\ell _p^d$, where $p\in S$ and $\ell _p$ is the linear form whose equivalence class corresponds to $p\in \PP^n(\CC)$. Both $X(\CC)$ and the embedding $X(\CC )\hookrightarrow \PP^n(\CC )$ are defined over $\RR$. If $d$ is odd and $S\subset \PP^n(\RR)$ (with $|S|$ minimal) the interpretation above  gives the definition of $X(\RR)$-rank. If $d$ is even we allow signs, i.e. to define the $X(\RR)$-rank or real rank  of $f\in  S^d\RR^{n+1}$ we allow as addenda $\pm \ell _p^d$. As an immediate consequence of Theorem \ref{g1} and taking $r: = g-1$ in \cite[Theorem 1.1]{cov1}
we get the following result.

\begin{theorem}\label{g2}
Fix integers $n\ge 1$ and $d\ge 3$. Assume $(n,d) \notin \{(2,6), (3,4), (5,3)\}$. Set $r:= -1+\binom{n+d}{n}$ and $g:= \lceil (r+1)/(n+1)\rceil$. Let $X(\CC)\subset \PP^r(\CC)$ be the order $d$ Veronese embedding. Let $\Bb$
be the set of $x\in \PP^n(\RR)$ without  a label of weight  $g+1$. Then $\Bb$ is contained in a
real hypersurface of $\PP^r(\RR)$ and in particular it has measure $0$ and contains no non-zero euclidean open subset.
\end{theorem}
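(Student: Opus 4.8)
The plan is to obtain Theorem \ref{g2} as the instance of Theorem \ref{g1} in which $X(\CC)=\nu _d(\PP^n(\CC))$, once the hypotheses of Theorem \ref{g1} are checked and its integer $g$ is identified with $\lceil (r+1)/(n+1)\rceil$. First I would dispatch the elementary points. The $d$-th Veronese variety is smooth (so $X_{\reg}(\CC)=X(\CC)$), irreducible, and non-degenerate in $\PP^r(\CC)$, since the $\binom{n+d}{n}$ degree-$d$ monomials in $n+1$ variables are linearly independent; as recorded in the text, $X(\CC)$ and the embedding $X(\CC)\hookrightarrow \PP^r(\CC)$ are defined over $\RR$. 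Because $X(\CC)$ is smooth, $X_{\reg}(\RR)=X(\RR)=\nu _d(\PP^n(\RR))$, a nonempty real analytic manifold of dimension $n$. Hence all the blanket hypotheses of Theorem \ref{g1} hold.

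Next I would pin down $g$ using the Alexander--Hirschowitz theorem: for $d\ge 3$ and $(n,d)$ away from the finite list of defective Veronese secant varieties, $\dim \sigma _s(\nu _d(\PP^n))=\min \{r,\,s(n+1)-1\}$ for every $s$, so with $g=\lceil (r+1)/(n+1)\rceil$ one has $\sigma _{g-1}(X(\CC))\subsetneq \PP^r(\CC)=\sigma _g(X(\CC))$; thus $g$ is precisely the minimal integer appearing in Theorem \ref{g1}. The remaining hypothesis of Theorem \ref{g1}, generic uniqueness for $\sigma _{g-1}(X(\CC))$, is exactly the assertion that a general degree-$d$ form in $n+1$ variables of symmetric rank $g-1$ has a unique additive decomposition, which is \cite[Theorem 1.1]{cov1} with the rank taken to be $g-1$. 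The triples $(n,d,g-1)$ attached to $(n,d)\in \{(2,6),(3,4),(5,3)\}$ are $(2,6,9)$, $(3,4,8)$ and $(5,3,9)$, the three exceptional (non-identifiable) triples of \cite[Theorem 1.1]{cov1}, so these are the $(n,d)$ one is forced to discard; for $n=1$ (binary forms) uniqueness of the subgeneric decomposition is classical Sylvester theory and causes no trouble, and when $g-1=1$ the statement is trivial.

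Granting this, Theorem \ref{g1} produces a Zariski-open $U\subseteq \PP^r(\RR)$ with $\dim (\PP^r(\RR)\setminus U)\le r-1$ all of whose points carry a label of weight $g+1$, whence $\Bb \subseteq \PP^r(\RR)\setminus U$. The complement $\PP^r(\RR)\setminus U$ is Zariski-closed of dimension $\le r-1$, so its (real) vanishing ideal is nonzero and it is contained in a real hypersurface; in particular $\Bb$ has Lebesgue measure $0$ and contains no nonempty euclidean open subset, which is the assertion of the theorem.

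The main obstacle is the bookkeeping of the second step, namely ensuring that $\lceil (r+1)/(n+1)\rceil$ really is the generic rank. For $d\ge 3$ the Veronese secant varieties defective in the Alexander--Hirschowitz sense are exactly those with $(n,d)\in \{(2,4),(3,4),(4,3),(4,4)\}$, and for $(2,4)$, $(4,3)$ and $(4,4)$ the integer $\lceil (r+1)/(n+1)\rceil$ is one less than the actual generic rank, so the clean deduction from Theorem \ref{g1} applies only after these cases are treated separately (for instance by passing to the true generic rank $g+1$ and invoking the classical generic identifiability of the corresponding defective secant variety; for $(2,4)$ one can use the classical uniqueness of the representation of a general Clebsch plane quartic as a sum of five fourth powers). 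The case $(3,4)$ is already excluded, for the same reason it is excluded in \cite[Theorem 1.1]{cov1}. Once this case analysis is organized, the rest is a direct application of Theorem \ref{g1}.
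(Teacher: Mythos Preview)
Your proposal is correct and follows exactly the paper's route: check the standing hypotheses of Theorem \ref{g1} for the Veronese variety and then invoke \cite[Theorem 1.1]{cov1} to guarantee generic uniqueness for $\sigma_{g-1}(X(\CC))$, after which Theorem \ref{g1} yields the desired Zariski-open $U$. Your bookkeeping on the Alexander--Hirschowitz defective pairs $(2,4)$, $(4,3)$, $(4,4)$ (where $\lceil (r+1)/(n+1)\rceil$ is one less than the true generic rank) is in fact more careful than the paper's one-line proof, which silently identifies the formula with the generic rank.
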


We stress that our bounds do not depend on the real algebraic geometry of $X(\RR)$ or $X(\CC)$. In our opinion they give a very strong minimal way (minimal number of real parameters) to represents almost all $\PP^r(\RR)$ using finitely many charts with minimal number of parameters. The number of needed charts is upper bounded in an explicit way. Easy examples (the real rational normal curve) shows that the bound is sharp.

If $\dim X(\CC)=1$ generic uniqueness always holds over $\CC$ for any submaximal secant variety (\cite[Corollary 2.8]{cc2}) and
hence as an easy consequence of Theorem
\ref{g1} we get the following result.

\begin{proposition}\label{g3}
Let $X(\CC )\subset \PP^r(\CC)$ be an integral and non-degenerate curve defined over $\RR$ and with $X(\RR)$ infinite.
The set of all $p\in \PP^r(\RR)$ without a label of weight $\lfloor (r+5)/2\rfloor$ 
has real dimension $<r$.
\end{proposition}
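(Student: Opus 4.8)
The plan is to derive Proposition \ref{g3} from Theorem \ref{g1} by identifying the integer $g$ for a curve and then adjusting the weight according to the parity of $r$. Since $X(\CC)$ is a curve, $\mathrm{Sing}(X(\CC))$ is finite, so $X_{\reg}(\RR)=X(\RR)\setminus \mathrm{Sing}(X(\CC))$ is infinite (as $X(\RR)$ is infinite), in particular non-empty, and the standing hypothesis of Theorem \ref{g1} holds. We may assume $r\ge 2$; the case $r=1$ is trivial, since then $X(\CC)=\PP^1(\CC)$ and every point of $\PP^1(\RR)$ already has the label $(0,1)$, which extends to any prescribed weight by adjoining real points.

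Next I would pin down $g$. As $X(\CC)\subset \PP^r(\CC)$ is a non-degenerate curve, its higher secant varieties are not defective, i.e. $\dim \sigma _k(X(\CC))=\min(2k-1,r)$ for all $k\ge 1$; hence $\sigma _k(X(\CC))=\PP^r(\CC)$ precisely when $2k-1\ge r$, so $g=\lceil (r+1)/2\rceil$. (For the argument only the inequality $g\le \lceil (r+1)/2\rceil$ is needed, and this already follows from non-degeneracy, since one may pick $r+1$ points of $X(\CC)$ spanning $\PP^r(\CC)$.) Since $r\ge 2$ we have $g\ge 2$, so $\sigma _{g-1}(X(\CC))$ is a proper (submaximal) secant variety of the curve $X(\CC)$, and by \cite[Corollary 2.8]{cc2} generic uniqueness holds for $\sigma _{g-1}(X(\CC))$. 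Theorem \ref{g1} then yields a Zariski-open $U\subseteq \PP^r(\RR)$ with $\dim (\PP^r(\RR)\setminus U)\le r-1$ such that every $q\in U$ has a label of weight $g+1$.

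It remains to raise the weight from $g+1$ to $\lfloor (r+5)/2\rfloor$. A short parity computation gives $\delta :=\lfloor (r+5)/2\rfloor -(g+1)\in\{0,1\}$ (namely $\delta =0$ when $r$ is even and $\delta =1$ when $r$ is odd). Fix $q\in U$ and a set $S\subset X(\CC)$ realizing a label of weight $g+1$ for $q$, i.e. $\sigma (S)=S$, $|S|=g+1$ and $q\in \langle S\rangle _{\RR}$. Since $X(\RR)$ is infinite we may choose $\delta$ pairwise distinct points $p_1,\dots ,p_\delta\in X(\RR)\setminus S$; then $S':=S\cup\{p_1,\dots ,p_\delta\}$ satisfies $\sigma (S')=S'$ (the $p_i$ are real and $S$ is $\sigma$-invariant), $|S'|=g+1+\delta=\lfloor (r+5)/2\rfloor$, and $q\in \langle S\rangle _{\RR}\subseteq \langle S'\rangle _{\RR}$. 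Hence $q$ admits a label of weight $\lfloor (r+5)/2\rfloor$, so the set of $p\in\PP^r(\RR)$ without such a label is contained in $\PP^r(\RR)\setminus U$, whose real dimension is $\le r-1<r$; this is the assertion.

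The only input that is not pure bookkeeping is the non-defectivity of secant varieties of the curve $X(\CC)$ (really only the upper bound $g\le \lceil (r+1)/2\rceil$) together with the generic-uniqueness statement \cite[Corollary 2.8]{cc2}; both are classical for curves and are exactly what makes the deduction "easy", as announced before the statement. I expect the only place requiring care is the parity arithmetic relating $g+1$ to $\lfloor (r+5)/2\rfloor$ and checking that adjoining $\delta$ real points lands precisely on the advertised weight $\lfloor (r+5)/2\rfloor$ rather than overshooting it.
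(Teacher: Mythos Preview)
Your proof is correct and follows the same route as the paper: compute $g=\lceil (r+1)/2\rceil=\lfloor (r+2)/2\rfloor$ from non-defectivity of curve secants, invoke \cite[Corollary 2.8]{cc2} for generic uniqueness of $\sigma_{g-1}(X(\CC))$, and apply Theorem \ref{g1}. The paper's proof stops there, simply writing ``Apply Theorem \ref{g1}'', which literally yields labels of weight $g+1=\lfloor (r+4)/2\rfloor$; when $r$ is odd this falls one short of the stated $\lfloor (r+5)/2\rfloor$. Your extra step of adjoining $\delta\in\{0,1\}$ real points to raise the weight is exactly what is needed to close this small gap, so your argument is in fact more complete than the paper's own proof.
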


We extend Theorem \ref{g1} to arbitrary secant varieties in the following way.

\begin{theorem}\label{i4}
Let $X(\CC) \subset \PP^r(\CC)$ be an integral and non-degenerate variety defined over $\RR$ and such that $X_{\reg}(\RR)\ne
\emptyset$. Fix an integer $k\ge 2$ such that generic uniqueness holds for $\sigma _{k-1}(X(\CC))$. Then
there exists a Zariski open subset $U\subset \sigma _k(X(\CC))(\RR )$ such that $ \dim _{\RR}
\sigma _k(X(\CC))(\RR ) \setminus U \le \dim _{\RR} \sigma _k(X(\CC))(\RR )-1$ and each $q\in U$ has a label of weight $k+1$.
\end{theorem}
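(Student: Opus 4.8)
The plan is to mimic the proof of Theorem \ref{g1}, replacing the ambient space $\PP^r(\CC)$ by the secant variety $\sigma_k(X(\CC))$ throughout. First I would recall the setup: by hypothesis generic uniqueness holds for $\sigma_{k-1}(X(\CC))$, and we want to understand general real points of $\sigma_k(X(\CC))$. The natural object to study is the abstract ``addition map'' $\phi\colon X_{\reg}(\CC)^{(k)} \dashrightarrow \sigma_k(X(\CC))$ sending a general unordered $k$-tuple $S$ to the (necessarily unique, for generic members, by the generic uniqueness of the \emph{submaximal} secant variety together with non-defectivity considerations) point of $\langle S\rangle_\CC$ it determines; more precisely I would work on the level of the $k$-th symmetric product and invoke that a general point of $\sigma_k(X(\CC))$ lies on finitely many $(k-1)$-planes, so the fibers of the natural incidence correspondence are finite over a dense open set. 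The key point is that all of these maps are defined over $\RR$ because $X(\CC)\hookrightarrow \PP^r(\CC)$ and hence $\sigma_k(X(\CC))$ are.

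Next, the heart of the argument is to produce, over a Zariski-dense open subset of $\sigma_k(X(\CC))(\RR)$, a real point $q$ that is spanned by a $\sigma$-invariant set $S$ of cardinality $k+1$ (not $k$). I would first use $X_{\reg}(\RR)\ne\emptyset$: since $X_{\reg}(\RR)$ is Zariski-dense in $X_{\reg}(\CC)$ (Remark \ref{aad}) and is a real-analytic manifold of the full dimension, I can choose a real point $p_0\in X_{\reg}(\RR)$ and consider, for $S'$ ranging over $\sigma$-invariant subsets of $X(\CC)$ of cardinality $k$ with $p_0\notin \langle S'\rangle_\CC$, the $(k+1)$-element $\sigma$-invariant sets $S = S'\cup\{p_0\}$. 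The linear span $\langle S\rangle_\RR$ then has real dimension one more than $\langle S'\rangle_\RR$, i.e. it is $k$-dimensional, and as $S'$ varies over a parameter family of real dimension $(k)\dim_\CC X$ (accounting for the conjugation constraint exactly as in the weight bookkeeping of Section 1) the union of these spans sweeps out a set whose Zariski closure is all of $\sigma_{k+1}(X(\CC))(\RR) \supseteq \sigma_k(X(\CC))(\RR)$. A dimension count, using that $g$ (or here $k$) is minimal and that generic uniqueness at level $k-1$ forces the relevant incidence variety to have the expected dimension, shows that the complement of the locus of points so obtained inside $\sigma_k(X(\CC))(\RR)$ is cut out by a proper real subvariety, hence has real dimension $\le \dim_\RR \sigma_k(X(\CC))(\RR) - 1$.

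Concretely, the cleanest route is: consider the constructible subset $\Sigma\subset \sigma_k(X(\CC))(\RR)$ of points $q$ admitting \emph{some} label of weight $k+1$; show $\Sigma$ is constructible in the real Zariski topology (image of a real morphism from a real variety parametrizing pairs $(S,q)$ with $\sigma(S)=S$, $|S|=k+1$, $q\in\langle S\rangle_\RR$, $q\in\sigma_k(X(\CC))$); show $\Sigma$ is Zariski-dense in $\sigma_k(X(\CC))(\RR)$ by exhibiting, near each real smooth point of $\sigma_k(X(\CC))$, a euclidean-open family of such $q$ — using the $\sigma$-invariant perturbation trick with the anchor point $p_0\in X_{\reg}(\RR)$, namely take a real decomposition's worth of conjugate pairs plus $p_0$; then by constructibility $\Sigma$ contains a Zariski-dense open $U$, and the complement has dimension $\le \dim_\RR \sigma_k(X(\CC))(\RR)-1$ as required. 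The generic-uniqueness hypothesis at level $k-1$ is what guarantees that a general fiber of the incidence correspondence is $0$-dimensional, so that the parameter count is exact and $U$ is genuinely open rather than merely dense.

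The main obstacle, as in Theorem \ref{g1}, is verifying the density of $\Sigma$ with the \emph{correct} weight, i.e. weight exactly $k+1$ rather than something larger: one must be sure that the $(k+1)$-element $\sigma$-invariant set $S'\cup\{p_0\}$ one builds actually has $q$ in the \emph{real} span $\langle S\rangle_\RR$ and that this real span is no bigger than necessary, so that the weight is $k+1$ and not, say, $k+2$. This requires a careful local analysis at a general smooth real point of $\sigma_k(X(\CC))$: I would take a general real point $q_0\in\sigma_k(X(\CC))(\RR)$, write $q_0$ (over $\CC$) as lying on $\langle S_0\rangle_\CC$ with $|S_0|=k$ and $S_0$ general — note $S_0$ need not be $\sigma$-invariant — and then deform $q_0$ slightly within $\sigma_k(X(\CC))(\RR)$ so that the corresponding decomposition, now of cardinality $k$, can be ``completed'' by adjoining the fixed real point $p_0$ after first absorbing the non-realness of $S_0$ into a conjugate pair; the generic uniqueness at level $k-1$ is used to control how $S_0$ varies as $q_0$ moves, ensuring the construction is continuous and the resulting label has the minimal possible weight. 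Once this local statement is established, globalizing via constructibility and taking complements is routine and identical to the proof of Theorem \ref{g1}.
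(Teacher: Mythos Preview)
Your opening sentence is right --- the paper's proof literally is ``mimic Theorem \ref{g1}'' --- but your execution then diverges from that proof in a way that leaves a real gap.

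The paper's argument (one line: ``Use that $\sigma_k(X(\CC))$ is the join of $\sigma_{k-1}(X(\CC))$ and $X(\CC)$'') runs exactly as in Theorem \ref{g1}: by \cite[Theorem 3]{bb}, generic uniqueness at level $k-1$ forces a Zariski-dense set $\Vv\subset\sigma_{k-1}(X(\CC))(\RR)$ of points carrying a label of weight $k-1$; since $\Vv$ is Zariski dense in $\sigma_{k-1}(X(\CC))$ and $X(\CC)\setminus X(\RR)$ is Zariski dense in $X(\CC)$, the lines $\langle\{x,y\}\rangle_\CC$ with $x\in\Vv$, $y\in X(\CC)\setminus X(\RR)$ are Zariski dense in the join $\sigma_k(X(\CC))$; hence so are the $\sigma$-invariant planes $\langle\{x,y,\sigma(y)\}\rangle_\CC$, and their real parts give the open set $U$ with label of weight $(k-1)+2=k+1$.

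You instead build your $(k+1)$-sets as $S'\cup\{p_0\}$ with $S'$ a $\sigma$-invariant $k$-set and $p_0\in X_{\reg}(\RR)$ a fixed anchor. This is the wrong decomposition for two reasons. First, you never invoke \cite[Theorem 3]{bb}, which is precisely the place where the generic-uniqueness hypothesis is cashed in; your use of the hypothesis (``to control how $S_0$ varies as $q_0$ moves'') is too vague to substitute for it. Second, your density claim --- that the spans $\langle S'\cup\{p_0\}\rangle_\RR$ sweep out a set whose Zariski closure is $\sigma_{k+1}(X(\CC))(\RR)\supseteq\sigma_k(X(\CC))(\RR)$ --- does not give what you need: having Zariski closure containing $\sigma_k(X(\CC))(\RR)$ does not imply that the actual union contains a Zariski open subset of $\sigma_k(X(\CC))(\RR)$. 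Indeed, the whole point of the theorem is that real points of $\sigma_k(X(\CC))$ need \emph{not} generically carry a label of weight $k$, so knowing that the $\langle S'\rangle_\RR$ are Zariski dense in $\sigma_k(X(\CC))$ is useless for covering $\sigma_k(X(\CC))(\RR)$ outside a lower-dimensional set. Your ``main obstacle'' paragraph correctly identifies this difficulty but the proposed fix (``absorb the non-realness of $S_0$ into a conjugate pair'') is not an argument.

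The fix is simple and is exactly what the proof of Theorem \ref{g1} does: go \emph{down} to $\sigma_{k-1}$, use \cite[Theorem 3]{bb} there, and then go back up by adjoining a conjugate pair $\{y,\sigma(y)\}$ rather than a single real anchor. The join structure then makes the density in $\sigma_k(X(\CC))$ automatic.
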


We recall that if $\dim X(\CC) =n$ and $\sigma _{k+n-2}(X(\CC))$ has dimension $(k+n-2)(n+1)-1$, then generic uniqueness holds
for
$\sigma _{k-1}(X(\CC))$ (\cite[Theorem 5.1]{bbc}). Thus Theorem \ref{i4} may be applied in a huge number of cases (\cite{ab3,
acv, bc1, chio, cov1, mr}; see
\cite{bbc} for a partial, but longer, list). In particular it applies to several Segre embeddings of multiprojective spaces,
i.e. to  tensors, and of Segre-Veronese embeddings of a multiprojective space, i.e. to partially symmetric tensors. We
mention that if
$a>b>0$ and generic uniqueness holds for $\sigma _a(X(\CC ))$, then it holds for $\sigma _b(X(\CC ))$ (\cite[Proposition 2.3]{bbc}) and in particular
it holds for the Veronese embeddings of a projective space, except for a very short list. The drawback of Theorem
\ref{i4} is that for these examples it covers just one case not covered in \cite{bb}. Indeed if generic uniqueness holds for
$\sigma _k(X(\CC ))$ it is sufficient to use labels of weight $k$ by \cite[Theorem 3]{bb} and all of them are necessary, since different labels cover disjoint non-empty euclidean open subsets.

\begin{remark}We explain here why we think that our approach is computationally promising. We just consider $\PP^r(\RR)$, but Theorem
\ref{i4} may be used for the real parts of arbitrary $k$-secants varieties. Let
$g$ be the first positive integer such that
$\sigma _g(X(\CC)) =\PP^r(\CC)$. For each $q\in \PP^r(\RR)$ let $r_{X(\RR)}(q)$
denote the smallest cardinality of a set $S\subset X(\RR)$ such that $q\in \langle S\rangle _{\RR}$ (\cite{an, abc, bb, bbo, b,
bs, co, mmsv}). An integer
$x$ is a
\emph{typical rank}
of $X(\RR)$ if there is a non-empty open subset $\Delta _x$ for the euclidean topology such that $r_{X(\RR)}(q) = x$
for all $q\in \Delta _x$. Let $E$ be the set of all typical ranks of $X(\RR)$. Taking each $\Delta _x$ maximal (i.e. taking
the interior for the euclidean topology of the set of all points with $X(\RR)$-rank $x$) the set
$\PP^r(\RR)
\setminus
\cup _{x\in E}
\Delta _x$ has measure zero and usually the game is to handle all $q\in \cup _{x\in E} \Delta _x$. For each
$x\in E$ to describe each of $\Delta _x$ we need  a subset of $X(\RR)$ with cardinality $x$ and hence roughly speaking we need
$xn$ real parameters, where $n:= \dim _{\CC} X(\CC)$ (unless of course we know that a much smaller subset of $X(\RR)^x$ will do,
but each case needs a detailed study to restrict the subsets of $X(\RR)$ with cardinality $x$ which must be used to give
$\Delta _x$). Call $g'$ the maximal typical rank. Roughly speaking, we need $ng'$ real parameter for at least one euclidean open
subset of $\PP^r(\RR )$. The integer
$g$ is the minimal typical rank. It is known that $g'\le 2g$ and often $g'\le 2g-1$ or $g'\le 2g-2$ in some cases (\cite{bt,
bbo}). In Theorem \ref{g1} (and hence in its corollaries like Theorem \ref{g2} and Proposition \ref{g3}), we need to take labels of  weight $g+1$.
There are $\lfloor (g+1)/2\rfloor$  labels of weight $g+1$ (resp. $g$). For any label $(a,b)$ of weight
$2a+b$ we only need $(2a+b)n$ real parameters. Indeed we take $b$ distinct points $p_1,\dots ,p_b\in X(\RR)$ and
$a$ sufficiently general points $q_1,\dots ,q_a\in X(\CC)\setminus X(\RR )$. The choice of $q_1,\dots ,q_a$ depends on $2an$
real parameters. Then we add the complex conjugate of $q_1,\dots ,q_a$. On the contrary, to use typical ranks we sometimes need
almost $2gn$ real parameters (e.g. for the typical ranks of bivariate polynomial of the bivariate polynomials, a problem
settled by G. Blekherman in \cite{b}).
\end{remark}

Using the labels we may also handle real varieties $X(\CC )\subset \PP^r(\RR )$ with $X_{\reg}(\RR)=\emptyset$ (even with
$X(\RR)=\emptyset$) for which the notion of typical rank (or even the real rank if $X(\RR) =\emptyset$) is not very
interesting. We only consider labels $(a,0)$, i.e. corresponding to $a$ points of $X(\CC)\setminus X(\RR)$ and their complex
conjugates. We prove the following result.

\begin{theorem}\label{e1}
Assume $X_{\reg}(\RR )=\emptyset$. Let $g$ be the generic rank of $X(\CC)$. Fix an even integer $k$ such that $4 \le k\le g$.
Assume that generic uniqueness holds for $\sigma _{k-2}(X(\CC))$. Then $\sigma
_k(X(\CC))(\RR)$ is Zariski dense in $\sigma _k(X(\CC))$ and there is an open subset $U$ of $\sigma _k(X(\CC))(\RR)$
such that  $\sigma _k(X(\CC))(\RR)\setminus U$ is a real hypersurface of $\sigma _k(X(\CC))(\RR)$ and for each $q\in U$ there
is $S\subset X_{\reg}(\CC)$ with $\sigma (S)=S$, $|S| = k+2$ and $S$ of label $(\frac{k+2}{2},0)$.
\end{theorem}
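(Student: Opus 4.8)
The plan is to adapt the proof of Theorem~\ref{i4} one step further: where that proof uses a general \emph{real} point of $X_{\reg}(\RR)$ (unavailable here) it now uses one extra conjugate pair of points of $X_{\reg}(\CC)$, which is why the weight rises from $k+1$ to $k+2$ and the identifiability hypothesis drops from $\sigma_{k-1}$ to $\sigma_{k-2}$. Put $m:=k/2\ge 2$ and $n:=\dim X(\CC)$, and work on affine cones in $\CC^{r+1}$: let $C:=\widehat{\sigma_{m+1}(X(\CC))}$, an integral $\sigma$-invariant cone with $\dim_\CC C=\dim\sigma_{m+1}(X(\CC))+1$, and let $\Psi\colon C\to\RR^{r+1}$, $\Psi(w):=w+\sigma(w)$; this is the restriction to $C$ of an $\RR$-linear map, and it descends to a real-analytic map $\Psi\colon\sigma_{m+1}(X(\CC))\dashrightarrow\PP^r(\RR)$. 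If $w=\sum_{i=1}^{m+1}\lambda_i\widehat p_i\in C$ with the $p_i$ general points of $X(\CC)$ and $\lambda_i\in\CC^{*}$, then, since $X_{\reg}(\RR)=\emptyset$ forces $X(\RR)\subseteq\op{Sing}X(\CC)$ so that general points of $X(\CC)$ lie in $X_{\reg}(\CC)\setminus X(\RR)$, the set $S:=\{p_1,\sigma(p_1),\dots,p_{m+1},\sigma(p_{m+1})\}$ lies in $X_{\reg}(\CC)$, is $\sigma$-invariant with no real point, has $|S|=k+2$, has label $(\tfrac{k+2}{2},0)$, and $[\Psi(w)]\in\langle S\rangle_\RR$. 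Hence every point of $\op{Im}\Psi$ already carries the label claimed, and the whole statement reduces to showing that $\op{Im}\Psi$ contains $\sigma_k(X(\CC))(\RR)$ minus a real hypersurface; since $\op{Im}\Psi\subseteq\sigma_{k+2}(X(\CC))(\RR)$ this also yields Zariski density of $\sigma_k(X(\CC))(\RR)$ in $\sigma_k(X(\CC))$, and then $U$ is this complement (intersected with the smooth locus of $\sigma_k(X(\CC))(\RR)$).

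The second step is a differential computation via Terracini's lemma. At a point $w\in C$ with decomposition $p_1,\dots,p_{m+1}$ the affine tangent cone to $\sigma_{m+1}(X(\CC))$ is $\sum_i\widehat{T_{p_i}X(\CC)}$, so $\op{Im}(d\Psi_w)=\sum_{i=1}^{m+1}\bigl(\widehat{T_{p_i}X(\CC)}+\widehat{T_{\sigma(p_i)}X(\CC)}\bigr)$, which is exactly the Terracini tangent space of $\sigma_{k+2}(X(\CC))$ at $\Psi(w)$ attached to the $\sigma$-invariant configuration $S$. Being $\sigma$-invariant, this complex subspace has real slice of real dimension equal to its complex dimension $\dim_\CC\sigma_{k+2}(X(\CC))+1$; moreover a general $\sigma$-invariant configuration is, over $\CC$, as general as an arbitrary one, because the graph of $\sigma$ on $X(\CC)$ is Zariski dense in $X(\CC)\times X(\CC)$. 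So $\Psi$ has generically maximal real rank, $\op{Im}\Psi$ is Zariski dense in $\sigma_{k+2}(X(\CC))$, and the generic fibre is finite. Using $\sigma_{m+1}$ rather than $\sigma_m$ — which would already give the \emph{smaller} weight $k$ but whose image has a complement of full dimension in $\sigma_k(X(\CC))(\RR)$ — is what makes the fibre of $\Psi$ over a point $q\in\sigma_k(X(\CC))(\RR)$, namely the real intersection $C\cap(\tfrac12\widehat q+i\,\RR^{r+1})$, have non-negative expected dimension; the hypotheses $k\le g$ and generic uniqueness for $\sigma_{k-2}(X(\CC))$ (hence for all $\sigma_j$, $j\le k-2$, by \cite[Proposition~2.3]{bbc}) are used to make the secant maps involved birational, so that this real fibre does not collapse.

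The last step, and the main obstacle, is to upgrade ``the expected dimension is non-negative'' to ``the real fibre is non-empty over $q$ outside a real hypersurface, and meets the locus where $\Psi$ has maximal rank''. One way in: exhibit one good point $q_0$ — take $w_0\in C$ with decomposition $p_1,\dots,p_m,\sigma(p_1)$ (the tuple $(p_1,\dots,p_m,\sigma(p_1))$ is still Zariski-generic in $X(\CC)^{m+1}$, so $w_0$ is a smooth point of $C$), observe that $\Psi(w_0)=w_0+\sigma(w_0)$ lies in the span of $\{p_1,\sigma(p_1),\dots,p_m,\sigma(p_m)\}$, hence in $\widehat{\sigma_k(X(\CC))}$, and check via Terracini that $d\Psi_{w_0}$ surjects onto the real tangent space of $\sigma_k(X(\CC))(\RR)$ at $q_0$. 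Then the bad locus — points $q$ with no such preimage — is the image in $\sigma_k(X(\CC))(\RR)$ of a proper $\sigma$-invariant Zariski-closed subset of the natural incidence variety (the vanishing of a maximal Jacobian minor, or of an appropriate resultant), hence is contained in a real hypersurface $H$, which is proper because $q_0\notin H$. For $q\in U:=\sigma_k(X(\CC))(\RR)\setminus H$ the implicit function theorem then produces $w$ with $\Psi(w)=q$, whose decomposition gives the required $S\subset X_{\reg}(\CC)$. The genuinely delicate point is exactly this non-emptiness together with the tightness of the bad locus — real intersections can be empty even when their dimension count permits them — and it is here that generic uniqueness for $\sigma_{k-2}(X(\CC))$, rather than mere non-defectivity, does the work, ruling out that the real fibre degenerates on a set of codimension zero.
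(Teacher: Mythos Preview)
Your route is genuinely different from the paper's, and the part you yourself flag as ``the main obstacle'' is a real gap. Knowing that $d\Psi_{w_0}$ surjects onto $T_{q_0}\sigma_k(X(\CC))(\RR)$ at the special point $w_0$ (with decomposition $(p_1,\dots,p_m,\sigma(p_1))$) does \emph{not} show that $\op{Im}\Psi$ contains a neighbourhood of $q_0$ inside $\sigma_k(X(\CC))(\RR)$: for $w$ near $w_0$ the image $\Psi(w)$ lies in $\sigma_{k+2}(X(\CC))(\RR)$, not in $\sigma_k(X(\CC))(\RR)$, so the implicit function theorem gives you an open piece of the \emph{wrong} variety. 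Your assertion that the bad locus is the image of a proper Zariski-closed subset of an incidence variety is likewise unjustified: $\Psi$ involves complex conjugation, so it is not a morphism of complex varieties and ordinary constructibility does not apply. Finally, you invoke generic uniqueness for $\sigma_{k-2}$ only in the last sentence, without saying what role it plays in your fibre argument; as written, nothing in your proof would change if one assumed only non-defectivity.

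The paper's argument is short and uses the hypothesis in one stroke. Let $W\subset\sigma_{k-2}(X(\CC))$ be the Zariski-open locus where the rank-$(k-2)$ decomposition $S_q$ is unique. For $q\in W\cap\PP^r(\RR)$ the uniqueness of $S_q$ together with $\sigma(q)=q$ forces $\sigma(S_q)=S_q$; shrinking $W$ so that $S_q\subset X_{\reg}(\CC)$, the assumption $X_{\reg}(\RR)=\emptyset$ gives $S_q\cap X(\RR)=\emptyset$, i.e.\ $S_q$ already has label $(\tfrac{k-2}{2},0)$. Since $\sigma_k(X(\CC))$ is the join of $\sigma_{k-2}(X(\CC))$ with two copies of $X(\CC)$, one then enlarges $S_q$ by two further conjugate pairs $\{y,\sigma(y),z,\sigma(z)\}$ to obtain the required $\sigma$-invariant set of label $(\tfrac{k+2}{2},0)$, exactly as in the proof of Theorem~\ref{g1}. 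The single mechanism you are missing is ``uniqueness of the decomposition over $\CC$ forces $\sigma$-invariance over $\RR$''; with it, no differential argument is needed.
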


\begin{remark}Since we allow labels with weight $>r_{X(\CC)}(q)$ to cover $q\in \PP^r(\RR)$, the open subsets covering almost all $\PP^r(\RR)$ 
in Theorems \ref{g1} and \ref{g2} and Proposition \ref{g3} (or almost all $\sigma _k(X(\CC ))(\RR)$ in Theorem \ref{i4}) may
overlap. In Theorem \ref{e1} we have a unique label, but even in this case many $q\in \PP^r(\RR)$ may be in the linear span of
different $\sigma$-invariant sets with label
$(\frac{k+2}{2},0)$.\end{remark}

\begin{remark}
As in \cite{bb} the interested reader may work over an arbitrary real closed field $\Rr$ instead of the field $\RR$, just using $\Cc := \Rr (i)$ (an algebraic closure of $\Rr$)
instead of $\CC$.\end{remark}

I thank a referee for useful suggestion on the organization of the paper.

\section{The proofs}
\begin{remark}\label{aad}
Let $X(\CC)$ be an integral projective variety defined over $\RR$. Set $n:= \dim_{\CC} X(\CC)$. The set $X(\RR)$ is a closed subset of the compact $r$-dimensional complex
space $\PP^r(\RR)$. Let $X_{\reg}(\CC)$ be the set of all smooth points of $X(\CC)$. Since $X(\CC)$ is an integral variety, $X_{\reg}(\CC)$ is a non-empty connected smooth $n$-dimensional complex manifold. Let $M(\CC)$ be a smooth and connected $n$-dimensional complex manifold defined over $\RR$ and let $M(\RR)$ denote the set
of all its real points. Assume $M(\RR) \ne \emptyset$ and fix $p\in M(\RR)$. Call $\Aa_{M(\CC),p}$ the local ring at $p$ of the
complex analytic manifold $M(\CC)$. Obviously $M(\RR)$ is closed in $M(\CC)$. It is well-known that $M(\RR)$ is a smooth (but
not necessarily connected) differential manifold of pure dimension $n$ (\cite[Ch. II, Corollary 4.11]{gmt}). Now assume that
$M(\CC)$ is the complex analytic manifold associated to a complex algebraic variety defined over $\RR$ and that the real
structure of the complex analytic manifold $M(\CC)$ is the one induced by the assumption that this algebraic variety is
defined over
$\RR$. The set $M(\RR)$ is the same in the algebraic and in the analytic category. We  claim that $M(\RR)$ is dense for the
Zariski topology of the algebraic variety $M(\CC)$. To prove the claim it is sufficient to prove that if $f \in \Aa_{M(\CC),p}$
and if
$f$ vanishes at the germ of $M(\RR)$ at $p$, then $f=0$. Suppose for instance that $M(\CC)$ (as an analytic manifold) contains
a euclidean neighborhood $U$ of $0\in \CC$ with the usual complex conjugation involution with $\RR$ and write $\Aa _{\CC
^n,0}$ for the local ring of convergent power series in $n$ complex variables. If $f\in \Aa _{\CC^n,0}$ vanishes at each point
of $U\cap \RR^n$, then all coefficients of the power series of $f$ vanishes (this is easily reduced to the case $n=1$ in which
it is obvious that if $f\ne 0$, then $f$ has at most countably many zeros). By \cite[Proposition 4.2]{gmt} the involution
$\sigma: M(\CC)\to M(\CC)$ is uniquely determined by the set of its fixed points (when non-empty).
\end{remark}

\begin{remark}\label{aaa1} 
Let $X(\CC)\subset \PP^r(\CC)$ be an integral and non-degenerate variety defined over $\RR$ (no
assumption on $X(\RR)$ and/or $X_{\reg}(\RR)$). Fix an even integer $k\ge 2$. Just to simplify the language we assume  $k\le r+1$, so that $k$ general points of $X(\CC)$ are
linearly independent.
For any positive integer $t$ set $X(\CC)^{(t)}:= \{(a_1,\dots ,a_t)\in X(\CC)^t\mid a_i\ne a_j$ for all $i\ne j\}$.  Since $X(\CC)^{(t)}$ is a non-empty Zariski open subset of $X(\CC)^t$,
it is an irreducible quasi-projective variety. It is invariant for the complex conjugation $\sigma$ and the fixed point set  is just the set $X(\RR)^t\cap X(\CC)^{(t)}$.
The symmetric group $S_t$ of permutations of $\{1,\dots ,t\}$ acts on $X(\CC)^{(t)}$ and this action commutes with the complex conjugation. Since $X(\CC)^{(t)}$ is a quasi-projective variety, there is a quasi-projective variety $X(\CC)_{(t)}$ parametrizing the orbits for this action of $S_t$ (\cite[p.
111]{mu}) and $X(\CC)_{(t)}$ is defined over $\RR$.
$X(\CC)_{(t)}$ parametrizes the subsets $S\subset X(\CC)$ with cardinality $t$. An element $S\in X(\CC)_{(t)}$ is an
element of
$X(\CC)_{(t)}(\RR)$ if and only if $\sigma (S)=S$. Thus if $q\in X(\CC)\setminus X(\RR)$, the set $\{q,\sigma (q)\}\in
X(\CC)_{(2)}(\RR)$, while $\{q,\sigma (q)\}\nsubseteq X(\RR)$. Thus for any even $t\ge 2$ there are many $S\in
X(\CC)_{(t)}(\RR)$ which are not contained in $X(\RR)$. If $X(\RR)\ne \emptyset$ this is also true for all odd integers $t\ge
3$.

\quad \emph{Claim 1:} For all even integers $t\ge 2$ the set $X(\CC)_{(t)}(\RR)$ is Zariski dense in $X(\CC)_{(t)}$.

\quad \emph{Proof of Claim 1:} Since $X(\CC)_{(t)}$ is an irreducible complex variety, to prove Claim 1 it is sufficient to prove that the Zariski closure
of $X(\CC)_{(t)}(\RR)$ in $X(\CC)_{(t)}$ contains a non-empty euclidean open subset. First assume $t=2$, take some $q\in X_{\reg}(\CC)\setminus X_{\reg}(\RR)$ and
a euclidean neighborhood $U$ of $q\in X_{\reg}(\CC)\setminus X_{\reg}(\RR)$. All points $\{q,\sigma (q)\}$, $q\in U$, are
contained in $X(\CC)_{(2)}(\RR)$ and their union give a euclidean neighborhood of $\{q,\sigma (q)\}$ in $X(\CC)_{(2)}$. Now
assume $t\ge 4$ and set $b:= t/2$. We take $\{q_1,\dots ,q_b\}\in X(\CC)_{(b)}$ with the additional restriction that $q_i \ne
\sigma (q_j)$ for any $i, j$ and use all these points $\{q_1,\dots ,q_b,\sigma (q_1),\dots ,\sigma (q_b)\in X(\CC)_{(t)}(\RR)$.

\quad \emph{Observation 1:} If $X_{\reg}(\RR)\ne \emptyset$ for all integers $t\ge 1$  the set $X(\CC)_{(t)}(\RR)$ is Zariski dense in $X(\CC)_{(t)}$.

\quad \emph{Claim 2:} The set $\sigma _k(X(\CC))(\RR)$ is Zariski dense in $\sigma _k(X(\CC))$.

\quad \emph{Proof of Claim 2:} For all integers $t\ge 1$ set 
$$X(\CC)_{\langle t\rangle}:= \{S\in X(\CC)_{(t)}\mid S \ \textit{is linearly independent}\}.$$
$X(\CC)_{\langle t\rangle)}$ is a non-empty Zariski open subset of $X(\CC)_{(t)}$ defined over $\RR$ and $X(\CC)_{\langle
t\rangle}(\RR) = X(\CC)_{\langle t\rangle}\cap X(\CC)_{(t)}(\RR)$. Thus $X(\CC)_{\langle t\rangle}(\RR)$ is Zariski dense in
$X(\CC)_{\langle t\rangle}$. For each $S\in X(\CC)_{\langle t\rangle}(\RR)$ the linear space $\langle S\rangle_{\CC}$ is
defined over $\RR$ and hence $\langle S\rangle_{\CC}\cap \PP^r(\RR)$ is a real projective space of dimension $t-1$. Since $k$
is even, Claim 1 shows that the union of these real projective spaces is Zariski dense in $\sigma _k(X(\CC))$.

\quad \emph{Observation 2:} Take $X(\CC)$ with $X(\RR) =\emptyset$. Claim 2 gives that $\sigma _2(X(\CC))(\RR)$ is large. For
instance we may take as $X(\CC)$ a smooth plane conic $X(\CC)\subset \PP^2(\CC)$ defined over $\RR$ but with no real point (the
smooth conic $x_0^2+x_1^2+x_2^2=0$). We get $\sigma _2(X(\CC)) =\PP^2(\RR)$. Proposition \ref{aaa3} shows that each $q\in
\PP^2(\RR)$ has $(1,0)$ as a label.

\quad \emph{Observation 3:} If $X_{\reg}(\RR)\ne \emptyset$, then $\sigma _x(X(\CC))(\RR)$ is Zariski dense in $\sigma
_x(X(\CC))$ also for odd $x$. In many cases with $x\ge 2$ it is larger than the union of all linear spaces $\langle S\rangle
_{\RR}$ with $S\subset X(\RR)$ and $\sharp (S) \le x$.

\quad \emph{Observation 4:} Fix an integer $x\ge 2$ and assume that each subset of $X(\CC)$ with cardinality $2x$ is linearly
independent. This is sufficient to get that each $q\in \PP^r(\CC)$ with $r_{X(\CC)}(q)=x$ is in the linear span of a unique
set $S\subset X(\CC)$ with cardinality $x$. Thus $q\in \PP^r(\RR)$ is and only if $\sigma (S)=S$. Since $x\ge 2$, many such
points
$q$ are not in the linear span of the union of $x$ points of $X(\CC)$.
\end{remark}

\begin{example}\label{aaa2}
Let $X(\CC) \subset \PP^3(\CC)$ be the rational normal curve. Let $\tau (X(\CC))$ be its tangent develobale. By Sylvester's
theorem (\cite{cs}) a point $q\in \PP^3(\CC)$ has $r_{X(\CC)}(q)=2$ if and only if $q\in \PP^3(\CC)\setminus \tau (X(\CC))$.
Fix $q\in \PP^3(\RR)\setminus \tau (X(\CC))\cap \PP^3(\RR)$ and fix $S\subset X(\CC)$ such that $q\in \langle S\rangle_{\CC}$
and $|S|=2$. Since any $4$ points of $X(\CC)$ are linearly independent. Thus $S$ is unique. Since $\sigma (q)=q$
we get $\sigma (S)=S$. Thus $S$ has a label. Call $U$ (resp. $U'$) the set of all $q\in \PP^3(\RR)\setminus \tau (X(\CC))\cap
\PP^3(\RR)$ such that $S$ has label $(2,0)$ (resp. $(0,1)$). We have $U\cap U'=\emptyset$, $U\cup U' =
\PP^3(\RR)\setminus \tau (X(\CC))\cap \PP^3(\RR)$ and $U$ and $U'$ are semialgebraic sets of real dimension $3$.
\end{example}

\begin{proof}[Proof of Proposition \ref{aaa3}:]
If $q\in X(\RR)$, then $(1,0)$ is a label for $q$. Fix $q \in
\PP^{n+1}(\RR)\setminus X(\RR)$. Let $T(\CC)$ be the set of all lines $L\subset \PP^{n+1}(\CC)$ containing $o$.
The set
$T(\CC)$ is an $n$-dimensional complex space. Since $q \in
\PP^{n+1}(\RR)$, $T(\CC)$ is the complexification of a real $n$-dimensional projective space $T(\RR)$. For any $x\in
\PP^{n+1}(\RR)\setminus \{q\}$ the lines spanned by $\{q,x\}$ is an element of $T(\RR)$. Since $q\notin X(\CC)$, no element
of $T(\CC)$ is contained in $X(\CC)$. The set of all
$L\in T(\CC)$ tangent to $X(\CC)$ is a proper closed algebraic subset $\Delta (\CC)\subseteq T(\CC)$ defined over $\RR$.
Since $X_{\reg}(\RR)\ne \emptyset$ and each element of $T(\CC)$ meets $X(\CC)$, there is $o\in X_{\reg}(\RR)$ such that
the line $L$ spanned by $\{q,o\}$ is transversal to $X(\CC)$. Set $d:= \deg (X(\CC))$ and $S:= L\cap X(\CC)$. Since $L\in
T(\RR)$, we have $\sigma (S) = S$. Since $L\notin \Delta$, we have $|S|=d$. Since $\sigma (S) =S$, there is a set $S'\subseteq S$
such that $\sigma (S')=S'$ and $|S'|=2$. The line $L$ is spanned by $S'$. The set $S'$ has either label $(1,0)$ or label
$(0,2)$.
\end{proof}

\begin{proof}[Proof of Theorem \ref{g1}:]
Set $n:= \dim X(\CC)$. Since generic uniqueness holds for the secant variety $\sigma _{g-1}(X(\CC))$, $\sigma _{g-1}(X(\CC))$ is not defective,
i.e. it has dimension $(n+1)(g-1)-1$. Let $\Vv \subset \sigma _{g-1}(X(\CC))(\RR)$ be the set of all points with a label of
weight $g-1$. Since generic uniqueness holds for $\sigma _{g-1}(X(\CC))$, the set $\sigma _{g-1}(X(\CC))\setminus \Vv$ has
real dimension $<(n+1)(g-1)$ (\cite[Theorem 3]{bb}), i.e., since it is a semialgebraic set, it does not contain a non-empty
open subset of $\PP^r(\RR)$ for the euclidean topology.  A Zariski dense subset of $\PP^r(\CC)$ is obtained taking the union
of all $\langle \{x,y\}\rangle _\CC$ with $x\in \Vv$ and
$y\in X(\CC)\setminus X(\RR)$ (Claims 1 and 2 of Remark \ref{aaa1}). Thus we get a Zariski dense subset of $\PP^r(\CC)$ taking
the union of all complex linear spaces $\langle \{x,y,\sigma (y)\}\rangle _\CC$. The complex linear space $\langle \{x,y,\sigma
(y)\}\rangle _\CC$ is the linear span of $\sigma$-invariant set, i.e. a finite set with a label: if $x$ has $(a,b)$ as one of
its label, then we may take $(a+1,b)$ as the label.\end{proof}

\begin{remark}
The proof of Theorem \ref{g1} shows that we may omit the label $(0,g+1)$.
\end{remark}

\begin{proof}[Proof of Theorem \ref{g2}:]
By \cite[Theorem 1.1]{cov1} generic uniqueness holds for $\sigma _{g-1}(X(\CC))$, with the exceptions listed in the statement
of Theorem \ref{g2}.\end{proof}

\begin{proof}[Proof of Proposition \ref{g3}:]
Since $X(\CC)$ is a curve, its secant varieties are non-degenerate (\cite[Corollary 1.5]{a}, \cite[Remark 3.1 (i)]{cc1}). Thus
if
$r$ is odd we have $\sigma _{(r+1)/2}(X(\CC)) =\PP^r(\CC)$, while if $r$ is even we have $\sigma _{(r+2)/2}(X(\CC)) =
\PP^r(\CC)$ and $\sigma _{r/2}(X(\CC))$ is a hypersurface of  $\PP^r(\CC)$. Thus in the set-up of Theorem \ref{g1} we have $g =
\lfloor (r+2)/2\rfloor$. By \cite[Corollary 2.8]{cc2} generic uniqueness holds for  $\sigma _{\lfloor r/2\rfloor}(X(\CC))$.
Apply Theorem \ref{g1}.
\end{proof}

\begin{proof}[Proof of Theorem \ref{i4}:]
Use that $\sigma _k(X(\CC))$ is the join of $\sigma _{k-1}(X(\CC))$ and $X(\CC)$.
\end{proof}

\begin{proof}[Proof of Theorem \ref{e1}:] Let $W$ be a non-empty  Zariski open subset of $\sigma _{k-2}(X(\CC))$ such that for
each $q\in W$ there is a unique $S_q\subset X(\CC)$ with $|S_q|=k-2$ and $q\in \langle S_q\rangle$. The uniqueness of
$S_q$ gives
$\sigma (S_q)=S_q$ if $q\in W\cap \PP^r(\RR)$. Restricting if necessary $W$ we may assume that all $S_q$ are contained in
$X_{\reg}$. Thus
$S_q\cap X(\RR)=\emptyset$ for all $q$. Thus if $q$ has a label, then the label is of type $(\frac{k}{2}-1,0)$. Then we take the join
with $2$ copies of $X(\CC)$,i.e, to some $q\in W$ we associate all sets $S_q\cup \{y,z,\sigma (y),\sigma (z)\}\subset X(\CC)$
with $y, z\in X(\CC)\setminus X(\RR)$, $|\{y,z,\sigma (y),\sigma (z)\}|=4$ and $S_q\cap \{y,z,\sigma (y),\sigma
(z)\}=\emptyset$. Note that $S_q\cup \{y,z,\sigma (y),\sigma (z)\}$ is a $\sigma$-invariant set with label $(\frac{k}{2}+1,0)$.
\end{proof}

\end{document}